\renewcommand{\@seccntformat}[1]
{{\csname the#1\endcsname}.\hspace{0.3em}}
\renewcommand{\section}{\@startsection
{section}
{1}
{0mm}
{-1.5\baselineskip}
{\baselineskip}
{\bfseries\normalsize}}
\renewcommand{\subsection}{\@startsection
{subsection}
{2}
{0mm}
{-\baselineskip}
{0.5\baselineskip}
{\normalsize\itshape}}
\renewcommand{\subsubsection}{\@startsection
{subsubsection}
{3}
{0mm}
{-.5\baselineskip}
{-2mm}
{\normalsize\itshape}}
\theoremstyle{plain}
\newtheorem*{theorem*}{Theorem}
\newtheorem{theorem}{Theorem}[section]
\newtheorem{lemma}[theorem]{Lemma}
\newtheorem{corollary}[theorem]{Corollary}
\newtheorem{prop}[theorem]{Proposition}
\newtheorem*{corollary*}{Corollary}
\newtheorem*{DAT}{Decomposition by annuli theorem}
\newtheorem*{MT}{Mother Theorem}
\theoremstyle{definition}
\newtheorem*{defin*}{Definition}
\theoremstyle{remark}
\newtheorem*{remark*}{Remark}
\newtheorem*{example*}{Example}
\DeclareMathAlphabet{\matheur}{U}{eur}{m}{n}
\DeclareMathAlphabet{\matheus}{U}{eus}{m}{n}
\DeclareMathAlphabet{\matheuf}{U}{euf}{m}{n}
\numberwithin{equation}{section}
\newcommand{\abs}[1]{\left\lvert#1\right\rvert}
\newcommand{\norm}[1]{\left\lVert#1\right\rVert}
\DeclareMathOperator{\Div}{div}
\DeclareMathOperator{\CAP}{Cap}
\DeclareMathOperator{\re}{Re}
\DeclareMathOperator{\im}{Im}
\begin{document}

\author{Gerasim  Kokarev
\\ {\small\it Mathematisches Institut der Universit\"at M\"unchen }
\\ {\small\it Theresienstr. 39, D-80333 M\"unchen, Germany}
\\ {\small\it Email: {\tt Gerasim.Kokarev@mathematik.uni-muenchen.de}}
}

\title{Sub-Laplacian eigenvalue bounds on CR manifolds}
\date{}
\maketitle

\begin{abstract}
\noindent
We prove upper bounds for sub-Laplacian eigenvalues independent of a pseudo-Hermitian structure on a CR manifold. These bounds are compatible with the Menikoff-Sj\"ostrand asymptotic law, and can be viewed as a CR version of Korevaar's bounds for Laplace eigenvalues of conformal metrics.
\end{abstract}

\medskip
\noindent
{\small
{\bf Mathematics Subject Classification (2010)}: 32W25, 35P15, 32V20.

\noindent
{\bf Keywords}: CR manifold, sub-Laplacian, eigenvalues, counting function.}


\section{Introduction and statements of the results}
Let $M$ be an orientable compact strictly pseudo-convex CR manifold of dimension $(2n+1)$, possibly with boundary. For a pseudo-Hermitian structure $\theta$ on $M$ whose Levi form is positive definite we denote by
$$
0=\lambda_1(\theta)<\lambda_2(\theta)\leqslant\lambda_3(\theta)\leqslant\ldots\lambda_k(\theta)\leqslant\ldots
$$
the Neumann eigenvalues of the corresponding sub-Laplacian  $(-\Delta_b)$, see Sect.~\ref{prems} for precise definitions. Let $N_\theta(\lambda)$ be its  {\em counting function}; the value $N_\theta(\lambda)$ is the number of eigenvalues that are strictly less than a positive real number $\lambda$. By the results  of Menikoff and Sj\"ostrand~\cite{MS,Sj} and also of Fefferman and Phong~\cite{FP1,FP2} for general hypoelliptic operators, with the later improvements by Ponge~\cite{Po}, the counting function satisfies the following asymptotic formula:
\begin{equation}
\label{AL}
N_\theta(\lambda)\sim C_n\cdot\mathit{Vol}_\theta(M)\lambda^{n+1}\qquad\text{as }\quad\lambda\to+\infty,
\end{equation}
where the constant $C_n$ depends on the dimension of $M$ only. The purpose of this paper is to obtain a lower bound for $N_\theta(\lambda)$ that is compatible with the asymptotic formula. More precisely, we prove the following statement.
\begin{theorem}
\label{t1}
Let $M$ be a compact strictly pseudo-convex CR manifold, possibly with boundary. Then there exists a constant $C$, possibly depending on a CR structure, such that for any pseudo-Hermitian structure $\theta$ on $M$ with the positive definite Levi form the counting function $N_\theta(\lambda)$ satisfies the inequality
$$
N_\theta(\lambda)\geqslant C\cdot\mathit{Vol}_\theta(M)\lambda^{n+1},\qquad\text{where}\quad\lambda\geqslant 0
$$
and the volume is taken with respect to the form $\theta\wedge (d\theta)^n$.
\end{theorem}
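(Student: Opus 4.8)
The plan is to prove a metric--measure eigenvalue bound in the spirit of Grigor'yan--Netrusov--Yau and of Korevaar's argument, and then verify its hypotheses in the CR setting; the point is that the pseudo-Hermitian structure $\theta$ will enter only through the measure $\mathit{Vol}_\theta$, whereas all metric quantities are taken with a \emph{fixed} background structure $\theta_0$ (pick one with positive definite Levi form, so that every $\theta$ equals $e^{2u}\theta_0$ for a smooth function $u$). A short computation with the Levi form shows that, on the contact distribution, the Levi metric of $e^{2u}\theta_0$ is $e^{2u}$ times that of $\theta_0$; consequently, with $Q=2n+2$ the homogeneous dimension,
$$
\theta\wedge(d\theta)^n=e^{Qu}\,\theta_0\wedge(d\theta_0)^n,\qquad
\int_M\abs{\nabla_b f}^2_\theta\,\mathit{Vol}_\theta=\int_M e^{2nu}\abs{\nabla_b f}^2_{\theta_0}\,\mathit{Vol}_{\theta_0},
$$
where $\mathit{Vol}_\theta$ now also denotes the density $\theta\wedge(d\theta)^n$. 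The exponent $2n=Q-2$ in the second identity is exactly what matches an $L^{Q/2}$-Hölder inequality to the Carnot--Carathéodory volume growth and produces the power $\lambda^{n+1}$.

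Let $d_0$ be the Carnot--Carathéodory distance of $\theta_0$. I would use two facts about the fixed compact space $(M,d_0)$: (a) the Ball--Box estimate $\mathit{Vol}_{\theta_0}(B_{d_0}(x,r))\leqslant C_0 r^{Q}$, uniformly in $x\in M$ and $0<r\leqslant\operatorname{diam}(M)$; and (b) that $(M,d_0)$ is metrically doubling, hence every $d_0$-ball of radius $r$ is covered by at most $N_\ast$ balls of radius $r/2$. Here $C_0$ and $N_\ast$ depend only on the CR structure (through $\theta_0$) and \emph{not} on $\theta$. Next I would apply a decomposition-by-annuli statement to $(M,d_0)$: for any finite Borel measure $\nu$ on $M$ and any integer $N\geqslant1$ there exist annuli $A_i=B_{d_0}(x_i,R_i)\setminus B_{d_0}(x_i,r_i)$, $i=1,\dots,N$ (with $0\leqslant r_i\leqslant R_i$), such that $\nu(A_i)\geqslant\nu(M)/(cN)$ and the dilates $2A_i:=B_{d_0}(x_i,2R_i)\setminus B_{d_0}(x_i,r_i/2)$ are pairwise disjoint, where $c=c(N_\ast)$; apply it with $\nu=\mathit{Vol}_\theta$.

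On each $A_i$ take $u_i=\varphi_i(d_0(x_i,\cdot))$, where $\varphi_i\equiv1$ on $[r_i,R_i]$, $\varphi_i\equiv0$ off $[r_i/2,2R_i]$, and $\varphi_i$ is affine in between. This $u_i$ is $d_0$-Lipschitz, so it lies in the Neumann form domain $W^{1,2}_b$, it is supported in $2A_i$, and the eikonal inequality $\abs{\nabla_b d_0(x_i,\cdot)}_{\theta_0}\leqslant1$ gives $\abs{\nabla_b u_i}_{\theta_0}\leqslant 2/r_i$ on the inner shell $B_{d_0}(x_i,r_i)\setminus B_{d_0}(x_i,r_i/2)$ and $\leqslant1/R_i$ on the outer shell $B_{d_0}(x_i,2R_i)\setminus B_{d_0}(x_i,R_i)$, both of which lie inside $2A_i$. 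Feeding the transformation law and this gradient bound into the Dirichlet energy, and estimating $\int_{\mathrm{shell}}e^{2nu}\,\mathit{Vol}_{\theta_0}$ by Hölder's inequality with exponents $Q/(2n)$ and $Q/2$ together with (a), I obtain
$$
\int_M\abs{\nabla_b u_i}^2_\theta\,\mathit{Vol}_\theta\ \leqslant\ C_1\,\mathit{Vol}_\theta(2A_i)^{2n/Q},\qquad
\int_M u_i^2\,\mathit{Vol}_\theta\ \geqslant\ \mathit{Vol}_\theta(A_i).
$$

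Finally, since the $2A_i$ are disjoint, $\sum_i\mathit{Vol}_\theta(2A_i)\leqslant\mathit{Vol}_\theta(M)$, so for at least $N/2$ of the indices $\mathit{Vol}_\theta(2A_i)\leqslant 2\,\mathit{Vol}_\theta(M)/N$; for those indices, using $\mathit{Vol}_\theta(A_i)\geqslant\mathit{Vol}_\theta(M)/(cN)$ and $1-2n/Q=2/Q$, the Rayleigh quotient of $u_i$ is at most $C_2\,(N/\mathit{Vol}_\theta(M))^{2/Q}$. These are $\geqslant N/2$ functions with pairwise disjoint supports, so the min-max principle for the Neumann eigenvalues of $(-\Delta_b)$ forces $N_\theta(\lambda)\geqslant\lfloor N/2\rfloor$ for every $\lambda>C_2(N/\mathit{Vol}_\theta(M))^{2/Q}$. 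Given $\lambda\geqslant0$, choosing $N$ of order $\mathit{Vol}_\theta(M)\,\lambda^{Q/2}$ makes this threshold $<\lambda$ and gives $N_\theta(\lambda)\geqslant\lfloor N/2\rfloor\geqslant C\,\mathit{Vol}_\theta(M)\,\lambda^{n+1}$ as soon as $\mathit{Vol}_\theta(M)\,\lambda^{n+1}$ exceeds a fixed constant; for the remaining (small) values, and for $\lambda=0$, the asserted inequality is immediate because $N_\theta(\lambda)\geqslant1$ whenever $\lambda>0$. I expect the genuine work to be (i) the decomposition-by-annuli theorem over a compact sub-Riemannian space and (ii) the Ball--Box bound and the eikonal and Sobolev properties of $d_0$ in the $W^{1,2}_b$ framework (so that $d_0$-Lipschitz cut-offs are legitimate test functions); the Hölder and pigeonhole steps are routine. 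One point requiring care is the region near $\partial M$: one must use the \emph{intrinsic} Carnot--Carathéodory distance of $M$ and the Neumann form domain, so that cut-offs remain admissible and (a) survives for half-balls.
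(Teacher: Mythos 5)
Your proposal is correct and follows essentially the same route as the paper: a Grigor'yan--Netrusov--Yau decomposition by annuli of $(M,d_{\theta_0})$ applied to the measure $\mathit{Vol}_\theta$, Lipschitz cut-offs built from the Carnot--Carath\'eodory distance, the critical-exponent ($Q=2n+2$) H\"older estimate against the ball-box volume growth, a pigeonhole over the disjoint doubled annuli, and min-max, with the constant function handling small $\lambda$. The only cosmetic difference is that you carry the conformal factor $e^{2u}$ explicitly through the Dirichlet energy, whereas the paper packages the same computation as the conformal invariance of the sub-Riemannian $(2n+2)$-capacity and then applies H\"older to pass to the $2$-capacity.
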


As a consequence, this theorem yields upper bounds for the sub-Laplacian eigenvalues $\lambda_k(\theta)$ independent of a pseudo-Hermitian structure on $M$.
\begin{corollary}
\label{ei:b}
Let $M$ be a compact strictly pseudo-convex CR manifold, possibly with boundary. Then there exists a constant $C_*$, possibly depending on a CR structure, such that for any pseudo-Hermitian structure $\theta$ on $M$ with the positive definite Levi form the Neumann eigenvalues $\lambda_k(\theta)$ satisfy the inequalities
\begin{equation}
\label{KE}
\lambda_k(\theta)\mathit{Vol}_\theta(M)^{1/(n+1)}\leqslant C_*\cdot k^{1/(n+1)}\qquad\text{for any}\quad
k=1,2,\ldots,
\end{equation}
where the volume is taken with respect to the form $\theta\wedge (d\theta)^n$.
\end{corollary}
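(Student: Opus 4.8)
The plan is to obtain Corollary~\ref{ei:b} as an immediate consequence of Theorem~\ref{t1}, using the standard correspondence between a lower bound on the counting function and an upper bound on the eigenvalues. Fix a pseudo-Hermitian structure $\theta$ with positive definite Levi form and an index $k\geqslant 1$. The key observation is that, directly from the definition of $N_\theta$, the number $N_\theta(\lambda_k(\theta))$ of eigenvalues strictly below $\lambda_k(\theta)$ cannot exceed $k-1$: all such eigenvalues occur among $\lambda_1(\theta),\ldots,\lambda_{k-1}(\theta)$ when these are listed with multiplicities. Hence $N_\theta(\lambda_k(\theta))\leqslant k-1<k$.

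I would then apply Theorem~\ref{t1} with $\lambda=\lambda_k(\theta)\geqslant 0$, which gives
$$
C\cdot\mathit{Vol}_\theta(M)\,\lambda_k(\theta)^{n+1}\leqslant N_\theta(\lambda_k(\theta))<k.
$$
Rearranging and taking $(n+1)$-st roots yields
$$
\lambda_k(\theta)\,\mathit{Vol}_\theta(M)^{1/(n+1)}\leqslant C^{-1/(n+1)}\,k^{1/(n+1)},
$$
which is exactly \eqref{KE} with $C_*=C^{-1/(n+1)}$; since the constant $C$ of Theorem~\ref{t1} depends only on the CR structure, so does $C_*$.

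I do not expect any genuine obstacle here: this is the elementary inversion of a Weyl-type counting bound, and the only point requiring a moment's care is the multiplicity bookkeeping behind the estimate $N_\theta(\lambda_k(\theta))\leqslant k-1$. Should one wish to sidestep evaluating $N_\theta$ precisely at an eigenvalue, an equivalent route is a short proof by contradiction: if $\lambda_k(\theta)$ violated \eqref{KE}, then for $\lambda$ slightly below $\lambda_k(\theta)$ Theorem~\ref{t1} would produce at least $k$ eigenvalues strictly less than $\lambda$, contradicting the definition of $\lambda_k(\theta)$ once $C_*$ is fixed appropriately.
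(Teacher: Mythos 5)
Your proposal is correct and is essentially the paper's argument: both simply invert the counting-function bound of Theorem~\ref{t1} using the relation between $\lambda_k(\theta)$ and $N_\theta(\lambda)$, with the same constant $C_*=C^{-1/(n+1)}$. The only cosmetic difference is that you evaluate $N_\theta$ at $\lambda=\lambda_k(\theta)$ and use $N_\theta(\lambda_k(\theta))\leqslant k-1$, whereas the paper plugs in $\lambda=(C\,\mathit{Vol}_\theta(M))^{-1/(n+1)}k^{1/(n+1)}$ and invokes $\lambda_k(\theta)=\inf\{\lambda\geqslant 0: N_\theta(\lambda)\geqslant k\}$; these are equivalent forms of the same inversion.
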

\begin{proof}
Recall that the eigenvalues are related to the counting function by the following formula
\begin{equation}
\label{eis_cnt}
\lambda_k(\theta)=\inf\{\lambda\geqslant 0: N_\theta(\lambda)\geqslant k\}.
\end{equation}
Now for a given integer $k$ let $\lambda$ be the number
$$
(C\cdot\mathit{Vol}_\theta(M))^{-1/(n+1)}\cdot k^{1/(n+1)},
$$
where $C$ is the constant from Theorem~\ref{t1}. Then by the estimate for the counting function, we obtain that $N_\theta(\lambda)\geqslant k$, and by relationship~\eqref{eis_cnt}, we conclude that $\lambda_k(\theta)\leqslant\lambda$. Setting $C_*$ to be $C^{-1/(n+1)}$, we arrive at the estimate in the statement.
\end{proof}
These eigenvalue bounds have the right asymptotic behaviour  as $k\to +\infty$, and  can be viewed as a CR version of the celebrated result by Korevaar in~\cite{Korv}, which gives Laplace eigenvalue bounds for conformal Riemannian metrics. In fact, the proof of Theorem~\ref{t1} follows a similar strategy: one of the ingredients is the result by Grigor'yan and Yau~\cite{GY99,GNY} on decompositions of a metric space by annuli, built on the original method of Korevaar. The idea is to apply their decomposition theorem to the Carnot-Caratheodory metric on $M$. We show that this choice of a metric allows to get the right capacity estimates for the annuli in the decomposition, and thus, to obtain the bound for the counting function $N_\theta(\lambda)$ compatible with the asymptotic law~\eqref{AL}.

The hypothesis above that $M$ is compact is not very essential; in Sect.~\ref{mt} we show that it can be replaced by the following two hypotheses on a background pseudo-Hermitian structure: a so-called volume doubling property together with a growth hypothesis on the volume  of Carnot-Caratheodory balls. One of the most important examples of non-compact manifolds when these hypotheses are satisfied is the Heisenberg group $\mathbf{H}_n$, see Sect.~\ref{prems}. In particular, we have the following version of Theorem~\ref{t1} for variable subdomains $\Omega\subset\mathbf{H}_n$. By $N_\theta(\Omega,\lambda)$ we denote below the number of Neumann eigenvalues $\lambda_k(\theta,\Omega)$ of a subdomain $\Omega$ that are strictly less than $\lambda$.
\begin{theorem}
\label{t2}
There exists a constant $C(n)$ such that for any compact subdomain $\Omega\subset\mathbf{H}_n$ and any pseudo-Hermitian structure $\theta$ on $\Omega$ with the positive definite Levi form the counting function $N_\theta(\Omega,\lambda)$ satisfies the inequality
$$
N_\theta(\lambda,\Omega)\geqslant C(n)\cdot\mathit{Vol}_\theta(\Omega)\lambda^{n+1},\qquad\text{where}\quad\lambda\geqslant 0
$$
and the volume is taken with respect to the form $\theta\wedge (d\theta)^n$.
\end{theorem}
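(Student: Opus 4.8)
The plan is to deduce Theorem~\ref{t2} from the general result of Sect.~\ref{mt}, the Mother Theorem, applied with the Heisenberg group $\mathbf{H}_n$ --- equipped with its standard pseudo-Hermitian structure $\theta_0$ --- in the role of the background manifold; the point is that the structural constants in that theorem, which \emph{a priori} depend on a background structure, depend in this case only on $n$. Recall that $\mathbf{H}_n$ carries a one-parameter group of dilations $\delta_t$, $t>0$, which are CR automorphisms with $\delta_t^*\theta_0=t^2\theta_0$, and hence $\delta_t^*\bigl(\theta_0\wedge(d\theta_0)^n\bigr)=t^{2n+2}\,\theta_0\wedge(d\theta_0)^n$. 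Since $\delta_t$ scales the Levi form of $\theta_0$ by $t^2$, the Carnot--Carath\'eodory distance of $\theta_0$ satisfies $\dist(\delta_t x,\delta_t y)=t\,\dist(x,y)$, and, combined with the left invariance of $\theta_0$ and of this distance, one gets $B(x,r)=\tau_x\circ\delta_r\bigl(B(0,1)\bigr)$, where $\tau_x$ denotes left translation. Consequently
\[
\mathit{Vol}_{\theta_0}\bigl(B(x,r)\bigr)=\omega_n\,r^{2n+2}\qquad\text{for all }x\in\mathbf{H}_n,\ r>0,
\]
with $\omega_n=\mathit{Vol}_{\theta_0}(B(0,1))$; in particular $(\mathbf{H}_n,\theta_0)$ is volume doubling with constant $2^{2n+2}$ and satisfies the required growth of the volume of Carnot--Carath\'eodory balls with exponent equal to the homogeneous dimension $2n+2$, both with constants that are explicit functions of $n$ alone.

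With this in hand the reduction is straightforward. Any pseudo-Hermitian structure $\theta$ on a subdomain $\Omega\subset\mathbf{H}_n$ that is compatible with the ambient CR structure has the form $\theta=h\,\theta_0$ for a function $h$ on $\Omega$, and positivity of the Levi form of $\theta$ --- which equals $h$ times that of $\theta_0$ --- is equivalent to $h>0$; moreover $\theta\wedge(d\theta)^n=h^{n+1}\,\theta_0\wedge(d\theta_0)^n$, so $\mathit{Vol}_\theta(\Omega)=\int_\Omega h^{n+1}\,\theta_0\wedge(d\theta_0)^n$. Applying the Mother Theorem with $(\mathbf{H}_n,\theta_0)$ as background and with $\Omega$ carrying Neumann boundary conditions then gives $N_\theta(\lambda,\Omega)\geqslant C(n)\,\mathit{Vol}_\theta(\Omega)\,\lambda^{n+1}$, where $C(n)$ is the constant produced by that theorem; by the previous paragraph it depends only on $n$. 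The passage to the subdomain $\Omega$ costs nothing because the test functions built in the proof of the Mother Theorem are compactly supported inside the annuli of a decomposition of the ambient Carnot--Carath\'eodory metric, hence extend by zero to admissible Neumann test functions on $\Omega$, while the decomposition itself uses only the doubling of the ambient distance, which holds uniformly on all of $\mathbf{H}_n$.

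The real content is, of course, the Mother Theorem of Sect.~\ref{mt}, whose proof follows Korevaar's strategy~\cite{Korv} in the form due to Grigor'yan and Yau~\cite{GY99,GNY}: one applies the decomposition by annuli theorem to the Carnot--Carath\'eodory metric of $\theta_0$ together with the measure $\mathit{Vol}_\theta$, produces a family of pairwise disjointly supported cut-off functions of cardinality of order $\mathit{Vol}_\theta(\Omega)\,\lambda^{n+1}$, each having Rayleigh quotient for $(-\Delta_b^\theta)$ at most $\lambda$, and concludes by the variational characterisation of $N_\theta$. I expect the main obstacle to be the capacity estimate for a Carnot--Carath\'eodory annulus: unlike the two-dimensional conformal Laplacian, the sub-Laplacian Dirichlet energy is not invariant under the conformal change $\theta=h\,\theta_0$, so the capacity bound must carry exactly the right weight by $\mathit{Vol}_\theta$, and the exponent $n+1$ --- which is what makes the estimate compatible with the asymptotics~\eqref{AL} --- appears precisely because the Carnot--Carath\'eodory metric has homogeneous dimension $2n+2$ rather than topological dimension $2n+1$; this is where the exact volume identity above, and the availability of radial cut-off functions whose horizontal gradient is controlled by the reciprocal of the distance to the centre, are used in an essential way. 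For Theorem~\ref{t2} as such nothing more is required: it remains only to note that every structural constant entering this argument comes out as a function of $n$, which is immediate from the homogeneity of $\mathbf{H}_n$.
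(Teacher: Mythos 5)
Your proposal is correct and follows essentially the paper's own route: the paper likewise deduces Theorem~\ref{t2} from the Mother Theorem by observing that the standard structure on $\mathbf{H}_n$ (and hence any subdomain $\Omega$) satisfies the doubling and growth hypotheses with constants depending only on $n$, exactly the homogeneity argument you spell out via dilations and left invariance. The only point you gloss over is that the Mother Theorem gives the bound with a floor function, $N_\theta\geqslant\lfloor C\,\mathit{Vol}_\theta(\Omega)\lambda^{n+1}\rfloor$; the paper removes the floor for compact $\Omega$ by using the constant as an extra Neumann test function, a one-line remark you should include (or absorb into the constant) to get the stated floor-free inequality.
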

An argument similar to the one in the proof of Cor.~\ref{ei:b} also yields the bounds for the Neumann eigenvalues $\lambda_k(\theta,\Omega)$ independent of a pseudo-Hermitian structure $\theta$ and a subdomain $\Omega\subset\mathbf{H}_n$. Both Theorems~\ref{t1} and~\ref{t2} are consequences of a more general statement in Sect.~\ref{mt}. In particular, a statement similar to Theorem~\ref{t2} holds for arbitrary subdomains $\Omega\subset\mathbf{H}_n$ and pseudo-Hermitian structures on $\Omega$ with finite volume. In fact, following the approach of Grigor'yan and Yau~\cite{GY99,GNY} in the conformal geometry setting, one can give versions of these statements for rather arbitrary measures on CR manifolds.

The method used in this paper carries over essentially without changes to contact sub-Riemannian manifolds and can be also used to get sub-Laplacian eigenvalue bounds on general sub-Riemannian manifolds. We plan to address these and related questions in a forthcoming paper. 

\noindent
{\em Acknowledgements.} I am grateful to Andrei Agrachev for a number of discussions on the subject during my visit to SISSA, Trieste.

\section{Preliminaries}
\label{prems}
\subsection{Basic notions of CR geometry}
We start with recalling a basic material on CR manifolds; for the details we refer to~\cite{DT}. Let $M$ be a smooth manifold of dimension $2n+1$; we always assume that it is connected and orientable. A {\em CR structure} of type $(n,1)$ on $M$ is a complex sub-bundle $H^{1,0}$ of the complexified tangent bundle $TM\otimes\mathbf{C}$ that satisfies the following properties:
\begin{itemize}
\item[(i)] its complex rank is equal to $n$, and $H^{1,0}_x\cap\overline{H_x^{1,0}}=\{0\}$ for any $x\in M$;
\item[(ii)] it is formally integrable: $[H^{1,0},H^{1,0}]\subset H^{1,0}$, that is the Lie bracket of any $H^{1,0}$-valued vector field is an $H^{1,0}$-valued vector field.
\end{itemize}
A manifold equipped with such a CR structure is referred to as a {\em CR manifold.}

The sum $\re (H^{1,0}\oplus\overline{H^{1,0}})$ is called the {\em Levi distribution} on $M$. It is a real rank $2n$ sub-bundle of $TM$, denoted by $H$, which carries a natural complex structure
$$
J_b:(v+\bar v)\longmapsto i(v-\bar v).
$$
A {\em pseudo-Hermitian structure} $\theta$ on $M$ is a nowhere vanishing $1$-form whose kernel coincides with $H$. On an orientable manifold $M$ any two pseudo-Hermitian structures $\hat\theta$ and $\theta$ are related as $\hat\theta=\kappa\cdot\theta$, where $\kappa$ is a nowhere vanishing function. The {\em Levi form} $L_\theta$ of a pseudo-Hermitian structure $\theta$ is defined as
$$
L_\theta(v,\bar w)=-id\theta(v,\bar w),\qquad\text{where~ }v,w\in H^{1,0}.
$$
It is straightforward to see that the  Levi forms of two pseudo-Hermitian structures $\hat\theta$ and $\theta$ are related as $L_{\hat\theta}=\kappa\cdot L_\theta$; this is an important relation for the sequel. A CR manifold is called {\em strictly pseudo-convex} if the Levi form $L_\theta$ is positive definite for some pseudo-Hermitian structure $\theta$.

Let $M$ be a pseudo-convex CR manifold equipped with a pseudo-Hermitian structure $\theta$, and $\pi_H$ be the projection $TM\to H$ along the $1$-dimensional kernel of $d\theta$. Then the bilinear form
$$
d\theta(\pi_H\cdot,J_b\pi_H\cdot)+(\theta\otimes\theta)(\cdot,\cdot)
$$
is a Riemannian metric on $M$, referred to as the {\em Webster metric}. A straightforward calculation shows that its volume form coincides with $\theta\wedge(d\theta)^n$ up to a constant factor, which depends on $n$ only, see~\cite{DT}.

Basic examples of CR manifolds include real hypersurfaces in ${\mathbf C}^{n+1}$; for example, any odd-dimensional unit sphere $S^{2n+1}\subset\mathbf{C}^{n+1}$ is a strictly pseudo-convex CR manifold, see~\cite{DT}. Now we describe a more special example.
\begin{example*}[The Heisenberg group]
The space $\mathbf{C}^n\times\mathbf{R}$ viewed as a group with respect to the following operation
$$
(z,t)\cdot(w,s)=(z+w,t+s+2\im\langle z,w\rangle),
$$
where $\langle\cdot,\cdot\rangle$ is a standard Hermitian product in $\mathbf{C}^n$, is called the Heisenberg group $\mathbf{H}_n$. Its CR structure is given by the span of the vectors
\begin{equation}
\label{cr:fields}
T_i=\frac{\partial~}{\partial z_i}+i\bar{z}^i\frac{\partial~}{\partial t},\qquad\text{where~ } i=1,\ldots,n.
\end{equation}
As a pseudo-Hermitian structure one can take the $1$-form
$$
\theta=dt+i\sum^{n}_{j=1}\left(z^jd\bar{z}^j-\bar{z}^jdz^j\right);
$$
its differential  is $2i\sum dz^j\wedge d\bar{z}^j$. Thus, we see that the corresponding Levi form is positive definite, and $\mathbf{H}_n$ is a strictly pseudo-convex CR manifold. For any $x=(z,t)\in\mathbf{H}_n$ the Heisenberg norm $\abs{x}$ is defined as $(\norm{z}^4+t^2)^{1/4}$, where $\norm{\cdot}$ is a Euclidean norm in $\mathbf{C}^n$. The Heisenberg norm has a nice property of being homogeneous under the dilatations
$$
D_\epsilon(z,t)=(\epsilon z,\epsilon^2t),\qquad\text{where~ }(z,t)\in\mathbf{H}_n,\text{ ~and~ }\epsilon>0, 
$$
which are group and CR isomorphisms, see~\cite{DT}. It is also closely related to the Carnot-Caratheodory distance on $\mathbf{H}_n$, which we recall below.
\end{example*}

\subsection{Carnot-Caratheodory distance on CR manifolds}
Let $M$ be a strictly pseudo-convex CR manifold and $H$ be its Levi distribution. It is a straightforward exercise to show that for any point $x\in M$ and any local frame $\{X_i\}$ of the sub-bundle $H$ around $x$ the commutators of the $X_i$'s at $x$ span the  tangent space $T_xM$. This property is often referred to as the {\em H\"ormander condition} for the distribution $H$.

An absolutely continuous path $\gamma$ in $M$ is called {\em horizontal} if it is tangent to $H$ almost everywhere. For a pseudo-Hermitian structure on $M$ whose Levi form is positive definite the {\em Carnot-Caratheodory distance} between the points $x$ and $y$ is defined as
$$
d_\theta(x,y)=\inf\{\mathit{length_\theta(\gamma)} : \gamma\text{ is a horizontal path joining }x\text{ and }y\},
$$
where the length is taken with respect to the Webster metric. By the results of Caratheodory and Chow, see~\cite{NSW}, the H\"ormander condition implies that this metric is finite. Besides, it induces the same topology on $M$. 

The following statement is  a version of~\cite[Th.~1]{NSW}, see also~\cite{Mitch}; it is a consequence of the so-called "ball-box property".
\begin{prop}
\label{ess}
Let $M$ be a pseudo-convex CR manifold and $\theta$ be a pseudo-Hermitian structure whose Levi form is positive definite. Then for every compact set $K\subset M$ there exist positive constants $c_1$, $c_2$, and $\rho$, and a degree $(2n+2)$ polynomial $P_{2n+2}(x,r)$, whose coefficients are smooth positive functions in $x$, such that
$$
c_1\cdot P_{2n+2}(x,r)\leqslant\mathit{Vol}_\theta(B(x,r))\leqslant c_2\cdot P_{2n+2}(x,r)
$$
for an arbitrary Carnot-Caratheodory ball centred at $x\in K$ with $r\leqslant\rho$. 
\end{prop}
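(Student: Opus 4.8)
The plan is to deduce the estimate from the volume formula of Nagel, Stein and Wainger~\cite[Th.~1]{NSW} (see also~\cite{Mitch}) after identifying the local structure of the metric $d_\theta$. Since the assertion is local and $K$ is compact, it suffices to prove it, with uniform constants, on a neighbourhood of each point of a finite subcover of $K$ and then glue the resulting polynomials by a partition of unity: on overlaps they are all comparable to $\mathit{Vol}_\theta(B(x,r))$, hence to one another. So fix a coordinate neighbourhood $U$ and a smooth local frame $X_1,\dots,X_{2n}$ of the Levi distribution $H$ over $U$ which is orthonormal for the restriction to $H$ of the Webster metric, that is for $d\theta(\cdot,J_b\cdot)$. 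Then, for points of $U$, the Carnot-Caratheodory distance $d_\theta$ is exactly the CC-distance of the system $\{X_i\}$ in the sense of~\cite{NSW}, while $\mathit{Vol}_\theta$ and Lebesgue measure in the chart differ by a smooth positive factor, bounded away from $0$ and $\infty$ near $K$, which will be absorbed into the constants.

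First I would record that $\{X_i\}$ satisfies H\"ormander's condition with step exactly $2$: one has $H+[H,H]=TM$ because $d\theta$ restricts to a nondegenerate $2$-form on $H$ (indeed $d\theta(\cdot,J_b\cdot)$ is positive definite on $H$, by strict pseudo-convexity), so $H$ together with the brackets $[X_i,X_j]$ already spans $TM$. Hence in the estimate of~\cite[Th.~1]{NSW},
$$
\mathit{Vol}_\theta(B(x,r))\asymp\sum_I\abs{\lambda_I(x)}\,r^{d(I)},
$$
the index $I$ ranges over $(2n+1)$-tuples of iterated brackets of the $X_i$ of length at most $2$, $\lambda_I(x)$ is the determinant of those brackets at $x$ in the coordinate basis, and $d(I)$ is the sum of their lengths. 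Since $H$ has rank $2n$, any spanning tuple must use at least one bracket of length $2$, so $d(I)\geqslant 2n+2$, with equality precisely when the tuple consists of all $2n$ fields $X_i$ together with one bracket $[X_i,X_j]$. Moreover the coefficient of the lowest power, $\Lambda(x):=\sum_{d(I)=2n+2}\abs{\lambda_I(x)}$, is smooth and \emph{strictly positive everywhere}: strict pseudo-convexity forces the skew form $(v,w)\mapsto\theta([v,w])=-d\theta(v,w)$ on $H$ to be nondegenerate, so at each point some $[X_i,X_j]$ fails to be tangent to $H$ and contributes a nonzero $\lambda_I$. Put $P_{2n+2}(x,r):=\Lambda(x)\,r^{2n+2}$.

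To conclude, choose $\rho>0$ small, uniformly over the finite subcover. For $r\leqslant\rho$ every term with $d(I)>2n+2$ satisfies $r^{d(I)}\leqslant\rho\,r^{2n+2}$, so the right-hand side above lies between $\Lambda(x)\,r^{2n+2}$ and $\bigl(\Lambda(x)+C\rho\bigr)r^{2n+2}$, where $C$ bounds the finitely many coefficients on $K$; combining this with the two-sided bounds for $\Lambda$ on $K$ and the absorbed Jacobian gives $c_1\,P_{2n+2}(x,r)\leqslant\mathit{Vol}_\theta(B(x,r))\leqslant c_2\,P_{2n+2}(x,r)$ for $x\in K$ and $r\leqslant\rho$. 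Gluing the local polynomials by a partition of unity subordinate to the subcover produces a single $P_{2n+2}$ of the required form (its coefficient is a convex combination of the positive $\Lambda$'s, hence smooth and positive).

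The main obstacle is the bookkeeping of bracket-lengths in the Nagel--Stein--Wainger sum, and in particular checking that the minimal-degree coefficient $\Lambda(x)$ never vanishes --- this is the single place where the hypothesis of strict pseudo-convexity (equivalently, $\theta\wedge(d\theta)^n\neq0$) genuinely enters --- together with upgrading the pointwise formula to one with uniform constants and a single polynomial over the compact set $K$. A more self-contained alternative, avoiding~\cite{NSW}, would be to construct near each $x$ privileged coordinates adapted to the frame $(X_1,\dots,X_{2n},T)$, with $T$ a vector field spanning $\ker d\theta$, in which $B(x,r)$ is trapped between the Euclidean boxes $[-c'r,c'r]^{2n}\times[-c'r^2,c'r^2]$ and $[-c''r,c''r]^{2n}\times[-c''r^2,c''r^2]$; their Lebesgue volumes give the factor $r^{2n+2}$ directly, and passing to $\mathit{Vol}_\theta$ multiplies by the smooth positive density.
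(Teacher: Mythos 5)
Your argument is correct and takes essentially the same route as the paper, which states this proposition without proof as a version of \cite[Th.~1]{NSW}: your write-up just fills in that citation (step-$2$ bracket generation from nondegeneracy of $d\theta$ on $H$, minimal Nagel--Stein--Wainger degree $2n+2$, strict positivity of the bottom coefficient, uniformity over $K$ and the local identification of $d_\theta$ with the frame distance). The only small repair needed is that $\sum_{d(I)=2n+2}\abs{\lambda_I(x)}$ need not be smooth where an individual $\lambda_I$ changes sign; replace it by $\bigl(\sum_{d(I)=2n+2}\lambda_I(x)^2\bigr)^{1/2}$, which is smooth because the $\lambda_I$ never vanish simultaneously, or simply absorb this coefficient (bounded above and below on the compact $K$) into $c_1,c_2$ and take $P_{2n+2}(x,r)=r^{2n+2}$, after which the gluing and the rest of your argument go through unchanged.
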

As a consequence, we arrive at the following corollary.
\begin{corollary}
\label{pts}
Let $M$ be a compact strictly pseudo-convex CR manifold. Then the volume measure of any pseudo-Hermitian structure $\theta$ whose Levi form is positive definite satisfies the following properties:
\begin{itemize}
\item[(i)] the {\em doubling property}
$$
\mathit{Vol}_{\theta}(B(x,2r))\leqslant C_1\cdot\mathit{Vol}_{\theta}(B(x,r))
$$
for Carnot-Caratheodory balls with arbitrary $x$ and $r>0$, and some $C_1>0$;
\item[(ii)] the {\em growth property}
$$
\mathit{Vol}_{\theta}(B(x,r))\leqslant C_2\cdot r^{2(n+1)}
$$
for arbitrary Carnot-Caratheodory balls in $M$ and some constant $C_2>0$.
\end{itemize}
\end{corollary}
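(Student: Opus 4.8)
The plan is to deduce both properties directly from the ball--box estimate of Proposition~\ref{ess}, used with the compact set $K$ taken to be $M$ itself; the compactness of $M$ enters only to legitimate this choice and to turn the pointwise data coming from Proposition~\ref{ess} into uniform constants. The one remark I would isolate at the outset is that, because the Levi distribution $H$ has step exactly two, the polynomial $P_{2n+2}(x,r)$ of Proposition~\ref{ess} carries no monomial in $r$ of degree below $2(n+1)$; together with the fact that it has degree $2(n+1)$, this forces $P_{2n+2}(x,r)=\Lambda(x)\,r^{2(n+1)}$ for a smooth positive function $\Lambda$ on $M$. Indeed, a spanning frame for the H\"ormander system of $H$ of least total weight consists of a local frame of $H$ (each vector of weight $1$, and there are $2n$ of them) together with one bracket $[X_i,X_j]$ (of weight $2$), of total weight $2n+2$, so the ball $B(x,r)$ is comparable to a box of side lengths $r$ in $2n$ directions and $r^{2}$ in one direction, and no lower power of $r$ enters the volume asymptotics. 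We may also shrink $\rho$ so that $\rho\leqslant 1$.

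Granting this, I would prove~(ii) by splitting on the size of $r$. For $0<r\leqslant\rho$, Proposition~\ref{ess} gives $\mathit{Vol}_\theta(B(x,r))\leqslant c_2\Lambda(x)r^{2(n+1)}\leqslant c_2\bigl(\max_{y\in M}\Lambda(y)\bigr)r^{2(n+1)}$, the maximum being finite by continuity of $\Lambda$ and compactness of $M$. For $r\geqslant\rho$ one uses $B(x,r)\subseteq M$ to write $\mathit{Vol}_\theta(B(x,r))\leqslant\mathit{Vol}_\theta(M)\leqslant\bigl(\mathit{Vol}_\theta(M)\rho^{-2(n+1)}\bigr)r^{2(n+1)}$. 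Taking $C_2$ to be the larger of the two constants proves the growth property. For~(i) I would argue in the same spirit. If $2r\leqslant\rho$, then Proposition~\ref{ess} applies to both balls, and since $P_{2n+2}(x,2r)=2^{2(n+1)}P_{2n+2}(x,r)$,
\[
\mathit{Vol}_\theta(B(x,2r))\leqslant c_2\Lambda(x)(2r)^{2(n+1)}=2^{2(n+1)}\frac{c_2}{c_1}\,c_1\Lambda(x)r^{2(n+1)}\leqslant 2^{2(n+1)}\frac{c_2}{c_1}\,\mathit{Vol}_\theta(B(x,r)).
\]
If $2r>\rho$, then $B(x,r)\supseteq B(x,\rho/2)$, so Proposition~\ref{ess} gives the uniform lower bound $\mathit{Vol}_\theta(B(x,r))\geqslant c_1\Lambda(x)(\rho/2)^{2(n+1)}\geqslant c_1\bigl(\min_{y\in M}\Lambda(y)\bigr)(\rho/2)^{2(n+1)}=:v_*>0$, whereas trivially $\mathit{Vol}_\theta(B(x,2r))\leqslant\mathit{Vol}_\theta(M)$; hence the doubling ratio is at most $\mathit{Vol}_\theta(M)/v_*$. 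Taking $C_1$ to be the larger of the two constants proves the doubling property.

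I do not expect a genuine obstacle here: the corollary is essentially a repackaging of Proposition~\ref{ess} together with the compactness of $M$. The only step that really uses the hypothesis on the CR structure is the opening remark that $P_{2n+2}$ has no monomial in $r$ of degree less than $2(n+1)$ --- without it the upper bound of Proposition~\ref{ess} would grow faster than $r^{2(n+1)}$ as $r\to 0$ and statement~(ii) would fail near $r=0$; this is exactly the point at which the step-two nature of the Levi distribution is exploited. The remaining matter, that Proposition~\ref{ess} is only local (valid for $r\leqslant\rho$ and centres in a fixed compact set), is handled for $r>\rho$ by comparison with $M$ itself and, for the lower bound needed in~(i), with the fixed ball $B(x,\rho/2)$, whose volume is bounded below uniformly in $x$ by compactness.
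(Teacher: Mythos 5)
Your proof is correct and takes essentially the same route as the paper's own (sketched) argument: Proposition~\ref{ess} gives both bounds uniformly for small radii by compactness of $M$, while for large radii one compares with the finite total volume and, for the doubling property, with the uniform positive lower bound on $\mathit{Vol}_\theta(B(x,\rho/2))$. Your opening observation that the ball--box polynomial contains no monomial of degree below $2(n+1)$ (so that it is comparable to $r^{2(n+1)}$ as $r\to 0$) simply makes explicit a point the paper's sketch leaves implicit, and it is exactly what statement~(ii) requires near $r=0$.
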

\begin{proof}[Sketch of the proof]
The first statement is equivalent to saying that the quotient
$$
\mathit{Vol}_\theta(B(x,2r))/\mathit{Vol}_\theta(B(x,r))
$$
is bounded in $r>0$ and $x\in M$. By Prop.~\ref{ess} it is so for any $r\leqslant\rho/2$ and $x\in M$. Now since $M$ is compact, its volume is finite. By Prop.~\ref{ess}, we see that the denominator is bounded from below for $r>\rho/2$, and conclude that this quotient is bounded from above for any $r>0$. The growth property~$(ii)$ follows in a similar fashion.
\end{proof}
\begin{example*}[continued]
The Levi distribution on the Heisenberg group $\mathbf{H}_n$ is the span of the vector fields
$$
X_i=\frac{\partial~}{\partial x^i}+2y^i\frac{\partial~}{\partial t},\qquad
Y_i=\frac{\partial~}{\partial y^i}-2x^i\frac{\partial~}{\partial t},
$$
where $z^i=x^i+\sqrt{-1}y^i$ and $(z^1,\ldots,z^n,t)\in\mathbf{H}_n$. These vector fields are twice the real and (minus) imaginary parts of the left-invariant vector fields $T_i$'s given by~\eqref{cr:fields}. Besides, the Levi form of the standard pseudo-Hermitian structure is also left-invariant, and so is the corresponding Carnot-Caratheodory metric $d_\theta(\cdot,\cdot)$ on $\mathbf{H}_n$, see~\cite{DT}. It is straightforward to show that the latter commutes with dilatations
$$
d_\theta(D_\varepsilon x,D_\varepsilon y)=\varepsilon\cdot d_\theta(x,y)\qquad\text{for any }
x,y\in\mathbf{H}_n\text{ and }\varepsilon>0.
$$
Thus, we conclude that $\mathbf{H}_n$ is a non-compact CR manifold that satisfies both properties~$(i)$ and~$(ii)$ in Cor.~\ref{pts}. There is also another natural metric on the Heisenberg group:
$$
d(x,y)=\abs{x^{-1}y},\qquad\text{where }x, y\in\mathbf{H}_n,
$$
and $\abs{\,\cdot\,}$ is the Heisenberg norm. By definition this metric is left-invariant, and since the Heisenberg norm is homogeneous, it also commutes with dilatations. Any metric with the last two properties is determined by its values on $\{0\}\times\partial B(0,1)$. From this we conclude that such a metric has to be equivalent to the Carnot-Caratheodory metric $d_\theta(\cdot,\cdot)$.
\end{example*}

\subsection{Sub-Laplacian on CR manifolds}
Let $M$ be a pseudo-convex CR manifold equipped with a pseudo-Hermitian structure $\theta$ whose Levi form is positive definite; by $\pi_H$ we denote the projection $TM\to H$ along the $1$-dimensional kernel of $d\theta$. The {\em sub-Laplacian} $(-\Delta_b)$ is a second order differential operator defined as 
$$
-\Delta_bu=-\Div(\nabla_bu),\qquad\text{where}\quad\nabla_bu=\pi_H\nabla u,
$$
and $\nabla u$ is regarded as a vector field with respect to the Webster metric; the divergence is understood in the sense of the volume form $\theta\wedge(d\theta)^n$. Equivalently, it can be defined as the Hormander operator
$$
-\Delta_bu=\sum^{2n}_{i=1} X^*_iX_iu,
$$
where $(X_i)$ is a local orthonormal frame of the sub-bundle $H$ with respect to the Webster metric, and the $(X^*_i)$'s are  the adjoint operators with respect the natural $L_2$-scalar product. 
\begin{example*}[continued]
The sub-Laplacian on the Heisenberg group $\mathbf{H}_n$ has the form
$$
-\Delta_b=\frac{1}{2}\sum_{i=1}^n\left(\frac{\partial^2~}{\partial z^i\partial\bar z^i}+\abs{z}^2\frac{\partial^2}{\partial t^2}-i\frac{\partial}{\partial t}\left(z^i\frac{\partial~}{\partial z^i}-\bar z^i\frac{\partial~}{\partial\bar z^i}\right)\right),
$$
where $(z^1,\ldots,z^n,t)$ are standard coordinates on $\mathbf{C}^n\times\mathbf{R}$, see~\cite{DT}.
\end{example*}

As is known, the sub-Laplacian is a sub-elliptic operator of order $1/2$. This means that for any $C^\infty$-smooth compactly supported function $u$ the following inequality holds
$$
{\vert\!\!\vert u\vert\!\!\vert}_{1/2}^2\leqslant C\left({\vert\!\!\vert}{(\Delta_bu,u)}{\vert\!\!\vert}+{{\vert\!\!\vert}{u}{\vert\!\!\vert}}^2\right)
$$
for some constant $C$, where ${\vert\!\!\vert\,\cdot\,\vert\!\!\vert}$ and ${\vert\!\!\vert\,\cdot\,\vert\!\!\vert}_{1/2}$ stand for the $L_2$-norm and the Sobolev $(1/2)$-norm respectively, see~\cite{DT}. On a compact manifold $M$ we view $(-\Delta_bu)$ as an operator defined on $C^\infty$-smooth functions that satisfy the following version of the {\em Neumann boundary hypothesis}:
\begin{equation}
\label{neumann}
\langle\nabla_bu,\vec{n}\rangle=0\text{ on }\partial M,\qquad\text{where }\vec{n}\text{ is an outward normal to }\partial M,
\end{equation}
and the brackets denote the scalar product in the sense of the Webster metric. By the CR version of Green's formula
$$
\int_M (\Delta_bu)v\mathit{dVol}_\theta+\int_M\langle\nabla_bu,\nabla_bv\rangle\mathit{dVol}_\theta=
\int_{\partial M}\langle\nabla_bu,\vec{n}\rangle v\mathit{dS}_\theta,
$$
we see that this operator is formally self-adjoint. In particular, its resolvent set is not empty. Using the compact Sobolev embedding together with the sub-ellipticity hypothesis, it is straightforward to conclude that its resolvent is compact and the spectrum is discrete. By the same Green's formula, it coincides with the spectrum of the Dirichlet form $\int\abs{\nabla_b u}^2\mathit{dVol}_\theta$, viewed as a form on smooth functions on $M$.

If a manifold $M$ is non-compact, then we assume that the sub-Laplacian is defined on $C^\infty$-smooth functions that, in addition to the Neumann boundary hypothesis~\eqref{neumann}, are compactly supported. As is known, such an operator is closable in the space of $L_2$-integrable functions, and its counting function $N_\theta(\lambda)$ can be defined as
$$
N_\theta(\lambda)=\sup\left\{\dim V: \int\abs{\nabla_b u}^2\mathit{dVol}_\theta<\lambda\int u^2\mathit{dVol}_\theta\text{ for any }u\in V\backslash\{0\}\right\},
$$
where $V$ is  a subspace formed by smooth functions. We assume that the supremum over the empty set equals zero. The eigenvalues of such an operator are precisely the jump points of the counting function,
$$
\lambda_k(\theta)=\inf\left\{\lambda\geqslant 0: N_\theta(\lambda)\geqslant k\right\}.
$$
We refer to~\cite{GNY}, where the details on a more general setting can be found.

\section{General statement}
\label{mt}

Let $M$ be a strictly pseudo-convex CR manifold. In sequel by a pseudo-Hermitian structure on $M$ we mean a pseudo-Hermitian structure with the positive definite Levi form. By $\lfloor x\rfloor$ we also denote the floor function of $x\in\mathbf{R}$, the greatest integer that is at most $x$.  The following general statement is proved in Sect.~\ref{proofs}.
\begin{MT}
Let $M$ be a strictly pseudo-convex CR manifold of dimension $2n+1$, possibly with boundary. Let $\theta_0$ be a background pseudo-Hermitian structure whose  Carnot-Caratheodory metric is complete and the volume measure $\mathit{Vol}_{\theta_0}$ satisfies the following properties:
\begin{itemize}
\item[(i)] the {\em doubling property}
$$
\mathit{Vol}_{\theta_0}(B(x,2r))\leqslant C_1\cdot\mathit{Vol}_{\theta_0}(B(x,r))
$$
for Carnot-Caratheodory balls with arbitrary $x$ and $r>0$, and some $C_1>0$;
\item[(ii)] the {\em growth property}
$$
\mathit{Vol}_{\theta_0}(B(x,r))\leqslant C_2\cdot r^{2(n+1)}
$$
for arbitrary Carnot-Caratheodory balls in $M$ and some constant $C_2>0$.
\end{itemize}
Then there exists a constant $C$, depending on $C_1$, $C_2$, and $n$ only, such that for any  pseudo-Hermitian structure $\theta$ of finite total volume, $\mathit{Vol}_\theta(M)<+\infty$, the counting function $N_\theta(\lambda)$ satisfies the inequality 
$$
N_\theta(\lambda)\geqslant\lfloor C\cdot\mathit{Vol}_\theta(M)\lambda^{n+1}\rfloor\qquad\text{for any~}\lambda\geqslant 0.
$$
In particular, the sub-Laplacian eigenvalues $\lambda_k(\theta)$ satisfy the following inequalities
$$
\lambda_k(\theta)\mathit{Vol}_\theta(M)^{1/(n+1)}\leqslant C_*\cdot k^{1/(n+1)}\qquad\text{for any~} k=1,2,\ldots,
$$
where $C_*=C^{-1/(n+1)}$.
\end{MT}
First, if the manifold $M$ is compact, then, by Cor.~\ref{pts}, the properties $(i)$ and $(ii)$ hold for an arbitrary pseudo-Hermitian structure. In this case the above estimate for the counting function $N_\theta(\lambda)$ can be improved -- no floor function is necessary. Indeed, when $M$ is compact, we always have an extra test-function, the constant, which yields an improved estimate, see Sect.~\ref{proofs}. With this improvement, Theorem~\ref{t1} becomes  a consequence of the Mother Theorem. Further, the standard pseudo-Hermitian structure on the Heisenberg group $\mathbf{H}_n$ satisfies the hypotheses~$(i)$ and~$(ii)$, and so does any its subdomain $\Omega\subset\mathbf{H}_n$ with the same constants $C_1$ and $C_2$. Since the constant $C$ in the estimate for the counting function depends only on $C_1$, $C_2$, and $n$, we also obtain the statement in Theorem~\ref{t2}.

Finally, mention that, following the idea of Grigor'yan and Yau in~\cite[Sect.~5.3]{GNY} in the conformal setting, the statement above can be generalised even further in another direction. Let $\sigma$ be a non-atomic finite Radon measure on $M$, and $N_\theta(\lambda,\sigma)$ be a counting function corresponding to the sub-Laplacian viewed as an operator in $L_2(M,\sigma)$. Then a straightforward generalisation of the argument in Sect.~\ref{proofs} shows that for any pseudo-Hermitian structure $\theta$ of finite total volume, $\mathit{Vol}_\theta(M)<+\infty$, the counting function $N_\theta(\lambda,\sigma)$ satisfies the inequality 
$$
N_\theta(\lambda,\sigma)\geqslant\lfloor C\cdot\frac{\sigma(M)^{n+1}}{\mathit{Vol}_\theta(M)^n}\lambda^{n+1}\rfloor\qquad\text{for any~}\lambda\geqslant 0,
$$
where the constant $C$ depends on $C_1$, $C_2$, and $n$ only. In particular, if $\sigma$ coincides with the volume measure $\mathit{Vol}_\theta$, then we obtain the original estimate in the Mother Theorem. As a consequence, we also have the following bounds for the eigenvalues $\lambda_k(\theta,\sigma)$:
$$
\lambda_k(\theta,\sigma)\frac{\sigma(M)}{\mathit{Vol}_\theta(M)^{n/(n+1)}}\leqslant C_*\cdot k^{1/(n+1)}\qquad\text{for any~} k=1,2,\ldots,
$$
where $C_*=C^{-1/(n+1)}$. Such considerations can be useful when one deals with other boundary conditions on $M$, see~\cite[Example~1.3]{Ko} and~\cite{CEG}.

\section{The proof}
\label{proofs}
\subsection{Prerequisities~I: metric space decomposition theorem}
We start with recalling the general result of Grigor'yan and Yau~\cite{GY99,GNY} concerning the decomposition of a metric space $(X,d)$ by annuli. By an annulus $A$ in  $X$ we mean a subset of the following form
$$
\{x\in X: r\leqslant d(x,a)<R\},
$$
where $a\in X$ and $0\leqslant r<R<\infty$. We also use the notation $2A$ for the annulus
$$
\{x\in M:r/2\leqslant d(x,a)<2R\}.
$$ 
Recall that a measure $\mu_0$ on $M$ is called {\em doubling}, if 
\begin{equation}
\label{double}
\mu_0(B(x,2r))\leqslant C_1\cdot\mu_0(B(x,r))
\end{equation}
for arbitrary $x\in X$ and $r>0$. We also say that a given measure $\mu$ on $X$ is {\em non-atomic}, if the mass of every point is zero.

Building on the ideas of Korevaar~\cite{Korv}, Grigor'yan and Yau showed that on certain metric spaces for any non-atomic measure $\mu$,  one can always find a collection of disjoint annuli $\{2A_i\}$ such that the values $\mu(A_i)$ are bounded below by some positive constant. More precisely, in~\cite{GY99,GNY} they prove the following statement.
\begin{DAT}
Let $(X,d)$ be a separable metric space whose all metric balls are precompact and that admits a doubling measure $\mu_0$. Then for any finite non-atomic measure $\mu$ and any positive integer $k$ there exists a collection $\{2A_i\}$ of $k$ disjoint  annuli such that
$$
\mu(A_i)\geqslant c\mu(X)/k\qquad\text{for any~}1\leqslant i\leqslant k,
$$
where the positive constant $c$ depends only on the doubling constant $C_1$ of $\mu_0$.
\end{DAT}
Mention that  in~\cite{GNY} this theorem is stated for metric spaces that satisfy the {\em metric doubling hypothesis:} there exists a constant $N$ such that any metric ball of radius $r$ can be covered by at most $N$ balls of radii $r/2$. For such a space no hypothesis on the existence of a doubling measure is necessary. However, by standard covering theorems~\cite{Matt}, it is straightforward to see that any separable metric space that carries a measure with doubling property~\eqref{double} satisfies the metric doubling hypothesis with the constant $N$ that depends only on the doubling constant $C_1$.

We are going to apply this theorem to a CR manifold $M$ viewed as a metric space with the Carnot-Caratheodory distance. In this setting, by the Hopf-Rinow theorem for length spaces~\cite{BBI}, the hypothesis that all metric balls are precompact is equivalent to the completeness of $M$. This is clearly satisfied when $M$ is compact, or has the structure of a Carnot group, or is complete in the Webster metric.

The use of annuli in the decomposition theorem is actually important. It allows to estimate the capacities $\CAP(A,2A)$, knowing an estimate for the capacities $\CAP(B,2B)$ of open metric balls $B$, see Lemma~\ref{cap2}. For our purposes it is appropriate to use a notion of capacity based on the CR version of the Dirichlet energy, that is the energy $\int\abs{\nabla_b u}^2\mathit{dVol}_\theta$. We describe it below in more detail.

\subsection{Prerequisites~II: capacities on CR manifolds}
Let $M$ be a strictly pseudo-convex CR manifold of dimension $(2n+1)$, and $\theta$ be a pseudo-Hermitian structure whose Levi form is positive definite. For a compact subdomain $\Omega\subset M$ and a real number $1\leqslant p<+\infty$ by $W^{1,p}(\Omega)$ we denote the Sobolev space formed by real-valued functions $u$ whose $(1,p)$-norm
\begin{equation}
\label{norm}
\abs{u}_{1,p}=\left(\int_\Omega\abs{u}^p\mathit{dVol}_\theta\right)^{1/p}+\left(\int_\Omega\abs{\nabla_bu}^p\mathit{dVol}_\theta\right)^{1/p}
\end{equation}
is finite. Recall that a real-valued function $u$ on $M$ is called Lipschitz in the sense of the Carnot-Caratheodory metric $d_\theta$, if there exists a constant $L$  such that
\begin{equation}
\label{Lcc}
\abs{u(x)-u(y)}\leqslant L\cdot d_\theta(x,y)\qquad\text{ for any }x\text{ and }y\text{ in }M.
\end{equation}
By the results in~\cite{NSW}, any smooth function is locally Lipschitz in the above sense. We proceed with  the following proposition, which is a reformulation of the results due to~\cite{FSSC96,FSSC97} and~\cite{GN98}.
\begin{prop}
\label{lip}
Let $M$ be a strictly pseudo-convex CR manifold, and $\theta$ be a pseudo-Hermitian structure whose Levi form is positive definite. 
\begin{itemize}
\item[(i)] Let $u$ be a Lipschitz function on $M$ in the sense of the Carnot-Caratheodory distance $d_\theta$, that is, $u$ satisfies~\eqref{Lcc}. Then the distributional derivative $\nabla_bu$ is a measurable bounded function whose absolute value $\abs{\nabla_bu}$ is not greater than $L$. 
\item[(ii)] The space $W^{1,p}(\Omega)$ is the closure in the norm~\eqref{norm} of the space of smooth functions on a compact subdomain $\Omega\subset M$.
\end{itemize}
\end{prop}
Now we describe the concept of capacity on CR manifolds. Recall that the {\em capacitor} in $M$ is a pair $(F,G)$ of Borel sets $F\subset G$. For a given pseudo-Hermitian structure $\theta$ and a real number $1\leqslant p<+\infty$ the {\em sub-Riemannian $p$-capacity} of a capacitor $(F,G)$ is defined as
$$
\CAP_p(F,G)=\inf\left\{\int_M\abs{\nabla_bu}^p\mathit{dVol}_\theta\right\}.
$$
Here the infimum is taken over all Lipschitz (in the Carnot-Caratheodory sense) test-functions on $M$, that is the functions that are equal to $1$ in a neighbourhood of $F$ and whose support is compact and lies in the interior of $G$. Using Prop.~\ref{lip} and the mollification technique, it is straightforward to show that the $p$-capacity $\CAP_p(F,G)$ can be equivalently defined by using smooth test-functions, see~\cite{GN98}.

In sequel we use the following auxiliary lemmas. 
\begin{lemma}
\label{cap1}
Let $M$ be a strictly pseudo-convex CR manifold equipped with a pseudo-Hermitian structure $\theta$ whose Levi form is positive definite. Suppose that $M$ is complete in the Carnot-Caratheodory metric $d_\theta(\cdot,\cdot)$. Then the sub-Riemannian $p$-capacity  of Carnot-Caratheodory balls satisfies the following inequality
$$
\CAP_p(B(x,r),B(x,2r))\leqslant \mathit{Vol}_\theta(B(x,2r))\cdot r^{-p}
$$
for arbitrary $x\in M$ and $r>0$.
\end{lemma}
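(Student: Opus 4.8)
The plan is to produce the bound by exhibiting an explicit Lipschitz test-function supported near $B(x,2r)$ and estimating its CR Dirichlet energy, the point being that the Carnot--Caratheodory distance from $x$ has a horizontal gradient of controlled length. Fix $x\in M$ and $r>0$, and for $\varepsilon\in(0,1)$ consider
$$
u_\varepsilon(y)=\min\left\{1,\;\frac{1}{1-\varepsilon}\left((2-\varepsilon)-\frac{d_\theta(x,y)}{r}\right)_+\right\},\qquad y\in M,
$$
where $(\,\cdot\,)_+$ is the positive part. Then $u_\varepsilon\equiv 1$ on the closed ball $\overline{B(x,r)}$, which is a neighbourhood of $B(x,r)$, and $u_\varepsilon$ vanishes off $\{y:d_\theta(x,y)<(2-\varepsilon)r\}$; since all Carnot--Caratheodory balls are precompact by hypothesis, $\overline{B(x,(2-\varepsilon)r)}$ is compact and contained in the open ball $B(x,2r)$, so $u_\varepsilon$ has compact support in the interior of $B(x,2r)$. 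Thus $u_\varepsilon$ is an admissible competitor in the definition of $\CAP_p(B(x,r),B(x,2r))$, provided it is Lipschitz.

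Next I would verify the Lipschitz bound and invoke the Proposition preceding the statement. The scalar map $t\mapsto\min\{1,\frac{1}{1-\varepsilon}((2-\varepsilon)-t/r)_+\}$ on $\mathbf{R}$ is Lipschitz with constant $\bigl((1-\varepsilon)r\bigr)^{-1}$, while $d_\theta(x,\cdot)$ is $1$-Lipschitz in the metric $d_\theta$ by the triangle inequality; hence $u_\varepsilon$ satisfies~\eqref{Lcc} with $L=\bigl((1-\varepsilon)r\bigr)^{-1}$. By the cited result of~\cite{GN98}, the horizontal gradient $\nabla_bu_\varepsilon$ then exists in the distributional sense, is measurable and bounded, and
$$
\abs{\nabla_bu_\varepsilon}\leqslant\frac{\sqrt{2n}}{(1-\varepsilon)r}\qquad\text{almost everywhere.}
$$

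Finally I would localise the gradient and optimise. Since $u_\varepsilon$ is constant on the open ball $B(x,r)$ and on the open set $\{y:d_\theta(x,y)>(2-\varepsilon)r\}$, the distributional derivative $\nabla_bu_\varepsilon$ vanishes almost everywhere outside the set $\{y:r\leqslant d_\theta(x,y)<(2-\varepsilon)r\}\subset B(x,2r)$. Combining this with the pointwise bound above,
$$
\int_M\abs{\nabla_bu_\varepsilon}^p\,\mathit{dVol}_\theta=\int_{\{r\leqslant d_\theta(x,\cdot)<(2-\varepsilon)r\}}\abs{\nabla_bu_\varepsilon}^p\,\mathit{dVol}_\theta\leqslant\left(\frac{\sqrt{2n}}{(1-\varepsilon)r}\right)^p\mathit{Vol}_\theta\bigl(B(x,2r)\bigr),
$$
so $\CAP_p(B(x,r),B(x,2r))\leqslant(2n)^{p/2}(1-\varepsilon)^{-p}\mathit{Vol}_\theta(B(x,2r))\,r^{-p}$; letting $\varepsilon\to 0$ yields the claimed inequality. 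The only genuinely delicate point is checking that the cutoff is an admissible test-function in the exact sense of the definition of $\CAP_p$ — compact support strictly inside the open ball $B(x,2r)$ — which is precisely why the precompactness hypothesis on Carnot--Caratheodory balls is used and why one passes through the slightly shrunk function $u_\varepsilon$; granting that, the estimate is an immediate consequence of the bound $\abs{\nabla_bd_\theta(x,\cdot)}\leqslant\sqrt{2n}$.
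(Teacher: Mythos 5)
Your proposal is correct and follows essentially the same route as the paper: an explicit piecewise-linear cutoff of the Carnot--Caratheodory distance $d_\theta(x,\cdot)$, slightly shrunk by a parameter $\varepsilon$ to guarantee admissibility (equal to $1$ near $B(x,r)$, compactly supported in $B(x,2r)$ by precompactness of balls), the gradient bound $\abs{\nabla_b u_\varepsilon}\leqslant\sqrt{2n}\,L$ from the cited Lipschitz proposition, and the limit $\varepsilon\to 0$. The only difference is the precise shape of the cutoff (the paper keeps $u_\varepsilon\equiv 1$ on the slightly larger set $\{d_\theta(x,\cdot)\leqslant r+\varepsilon/2\}$), which is immaterial.
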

\begin{proof}
For a given $r>0$ choose $\varepsilon$ such that $0<\varepsilon<r$. Consider the function $u_\varepsilon$ defined as
$$
u_\varepsilon(y)=\left\{
\begin{array}{ll}
1 & \text{for }\quad d_\theta(x,y)\leqslant r+\varepsilon/2;\\
\frac{\displaystyle 2r-\varepsilon/2-d_\theta(x,y)}{\displaystyle r-\varepsilon} & \text{for }\quad r+\varepsilon/2<d_\theta(x,y)<2r-\varepsilon/2;\\
0 & \text{for }\quad d_\theta(x,y)\geqslant 2r-\varepsilon/2.
\end{array}
\right.
$$
Clearly, it is Lipschitz and  can be used as a test-function for the $p$-capacity. Besides, it is straightforward to see that
$$
\abs{\nabla_bu_\varepsilon}\leqslant (r-\varepsilon)^{-1},
$$
and thus, we obtain 
$$
\CAP_p(B(x,r),B(x,2r))\leqslant \mathit{Vol}_\theta(B(x,2r))\cdot(r-\varepsilon)^{-p}.
$$
Passing to the limit as $\varepsilon\to 0+$, we obtain the inequality in the statement of the lemma.
\end{proof}
\begin{lemma}
\label{cap2}
Let $M$ be a strictly pseudo-convex CR manifold equipped with a pseudo-Hermitian structure $\theta$ whose Levi form is positive definite. Then for four nested Borel sets $F\subset G\subset F'\subset G'$ the following inequality 
$$
\CAP_p(F'\backslash G, G'\backslash F)^{1/p}\leqslant\CAP_p(F,G)^{1/p}+\CAP_p(F',G')^{1/p}
$$
holds, where $1\leqslant p<+\infty$.
\end{lemma}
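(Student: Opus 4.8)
The plan is to build a single admissible test function for the capacitor $(F'\setminus G,\,G'\setminus F)$ by \emph{subtracting} near-optimal test functions for the two given capacitors, and then to invoke Minkowski's inequality in $L^p$. Using a difference matters: a combination such as $\min(v,1-u)$ would only yield $\CAP_p\leqslant\CAP_p+\CAP_p$, whereas for $w=v-u$ one has $\abs{\nabla_b w}\leqslant\abs{\nabla_b v}+\abs{\nabla_b u}$ pointwise, and the $L^p$-triangle inequality then delivers the sharper $1/p$-subadditivity in the statement. Concretely, if $\CAP_p(F,G)$ or $\CAP_p(F',G')$ is infinite there is nothing to prove, so I fix $\varepsilon>0$ and choose Lipschitz test functions $u$ for $(F,G)$ and $v$ for $(F',G')$ with $\int_M\abs{\nabla_b u}^p\,\mathit{dVol}_\theta\leqslant\CAP_p(F,G)+\varepsilon$ and $\int_M\abs{\nabla_b v}^p\,\mathit{dVol}_\theta\leqslant\CAP_p(F',G')+\varepsilon$, and set $w:=v-u$, which is again Lipschitz in the Carnot-Caratheodory metric.

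Next I verify that $w$ is admissible for $\CAP_p(F'\setminus G,\,G'\setminus F)$. Let $U\supset F$ and $U'\supset F'$ be the open sets on which $u$ and $v$ are identically $1$; by definition $\supp u$ is a compact subset of $\mathrm{int}\,G$ and $\supp v$ a compact subset of $\mathrm{int}\,G'$. Since $\supp u\subset G$, the open set $(M\setminus\supp u)\cap U'$ contains $F'\setminus G$, and on it $w=1-0=1$, so $w$ equals $1$ on a neighbourhood of $F'\setminus G$. Moreover $\{w\neq0\}\subset\supp u\cup\supp v$, a compact set lying in $\mathrm{int}\,G\cup\mathrm{int}\,G'=\mathrm{int}\,G'$ (using $G\subset G'$), so $\supp w$ is compact and contained in $\mathrm{int}\,G'$. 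Finally, $F\subset F'\subset U'$ forces $u\equiv v\equiv1$ on the open set $U\cap U'\supset F$, whence $w\equiv0$ there; thus $\supp w$ misses an open neighbourhood of $F$, which places it inside the interior of $G'\setminus F$ — immediately when $F$ is closed, as in the ball-and-annulus situation in which the lemma is applied, and with a little additional bookkeeping for a general Borel $F$.

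It remains to estimate the energy of $w$ and conclude. Almost everywhere $\abs{\nabla_b w}=\abs{\nabla_b v-\nabla_b u}\leqslant\abs{\nabla_b v}+\abs{\nabla_b u}$ by the triangle inequality for the Webster norm, so Minkowski's inequality in $L^p(M,\mathit{dVol}_\theta)$ gives $(\int_M\abs{\nabla_b w}^p\,\mathit{dVol}_\theta)^{1/p}\leqslant(\int_M\abs{\nabla_b u}^p\,\mathit{dVol}_\theta)^{1/p}+(\int_M\abs{\nabla_b v}^p\,\mathit{dVol}_\theta)^{1/p}\leqslant(\CAP_p(F,G)+\varepsilon)^{1/p}+(\CAP_p(F',G')+\varepsilon)^{1/p}$. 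Since $w$ is admissible, the left-hand side is at least $\CAP_p(F'\setminus G,\,G'\setminus F)^{1/p}$, and letting $\varepsilon\to0$ finishes the proof. There is no genuine analytic obstacle here — the lemma is a soft, purely variational subadditivity fact — the only matters requiring attention being the choice of the difference $w=v-u$ (so that $L^p$-convexity rather than a weaker $\min/\max$ estimate is available) and the routine topological check of the neighbourhood-and-support conditions, which is transparent for the closed and open sets that occur in the applications.
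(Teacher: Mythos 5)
Your proposal is correct and follows essentially the same route as the paper: form the difference $v-u$ of (near-optimal) test functions for $(F,G)$ and $(F',G')$, check it is admissible for $(F'\setminus G,\,G'\setminus F)$, and apply Minkowski's inequality before taking infima. The extra support/neighbourhood bookkeeping you supply is detail the paper leaves implicit, not a different argument.
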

\begin{proof}
If $u$ and $v$ are test-functions for the capacitors $(F,G)$ and $(F',G')$ respectively, then the function $(v-u)$ is a test-function for the capacitor $(F'\backslash G,G'\backslash F)$. Now by the Minkowski inequality, we obtain
\begin{multline}
\left(\int\abs{\nabla_b(v-u)}^p\mathit{dVol}_\theta\right)^{1/p}\leqslant \left(\int(\abs{\nabla_bu}+\abs{\nabla_bv})^p\mathit{dVol}_\theta\right)^{1/p}\\
\leqslant\left(\int\abs{\nabla_bu}^p\mathit{dVol}_\theta\right)^{1/p}+\left(\int\abs{\nabla_bv}^p\mathit{dVol}_\theta\right)^{1/p}.
\end{multline}
Taking the infimums over test-functions, we arrive at the inequality in the lemma.
\end{proof}
Mention that various capacity estimates for concentric Carnot-Caratheodory balls have been obtained by a number of authors in the past, see~\cite{GM10} and the reference there. However, many of them have a form different from the one in Lemma~\ref{cap1} and have been obtained in view of different applications.

\subsection{Proof of the Mother Theorem}
For a given pseudo-Hermitian structure $\theta$ with the positive definite Levi form consider the $(2n+2)$-capacity
$$
\CAP_{2n+2}(F,G)=\inf\left\{\int_M\abs{\nabla_bu}^{2n+2}\mathit{dVol}_\theta\right\},
$$
where the infimum is taken over smooth (or Lipschitz) test-functions. Recall that any pseudo-Hermitian structure $\hat\theta$ with the positive definite Levi form has the form $\hat\theta=\kappa\cdot\theta$, where $\kappa$ is a positive smooth function. Using the relationship $L_{\hat\theta}=\kappa\cdot L_\theta$ for the Levi forms together with the formula  for  the volume form $\mathit{dVol}_\theta$, we conclude that this capacity is invariant with respect to the change of a pseudo-Hermitian structure. In particular, we have
$$
\CAP_{2n+2}(F,G)=\inf\left\{\int_M\abs{\nabla_bu}^{2n+2}\mathit{dVol}_{\theta_0}\right\},
$$
where $\theta_0$ is a background pseudo-Hermitian structure that satisfies the hypotheses~$(i)$ and~$(ii)$ in the theorem. Combining Lemma~\ref{cap1} with the hypothesis~$(ii)$, we get
$$
\CAP_{2n+2}(B,2B)\leqslant 2^{2n+2}C_2,
$$
for any Carnot-Caratheodory ball $B$, where $2B$ denotes the ball centered at the same point whose radius is twice greater. By Lemma~\ref{cap2}, we also obtain the bound
$$
\CAP_{2n+2}(A,2A)\leqslant c(n,C_2)
$$
for capacitors formed by annuli. Using the H\"older inequality we further get
\begin{equation}
\label{eq1}
\CAP_{2}(A,2A)\leqslant \bar c(n,C_2)\mathit{Vol}_\theta(2A)^{n/(n+1)}.
\end{equation}
Now let $c(C_1)$ be the constant from the annuli decomposition theorem for the space $(M,d_{\theta_0})$, and  set
$$
C=(4\bar c(n,C_2)/c(C_1))^{-(n+1)}.
$$
For a given $\lambda>0$ we denote by $k$  the integer $\lfloor C\cdot\mathit{Vol}_\theta(M)\lambda^{n+1}\rfloor$. By the annuli decomposition theorem there exists a collection $\{A_i\}$ of $2k$ Carnot-Caratheodory annuli such that
\begin{equation}
\label{eq2} 
\mathit{Vol}_\theta(A_i)\geqslant c(C_1)\mathit{Vol}_\theta(M)/(2k)\qquad\text{for every~}i=1,\ldots,2k,
\end{equation}
and the annuli $\{2A_i\}$ are disjoint. The latter implies that
$$
\sum_{i=1}^{2k}\mathit{Vol}_\theta(2A_i)\leqslant\mathit{Vol}_\theta(M),
$$
and hence, there exists at least $k$ sets $2A_i$ such that
$$
\mathit{Vol}_\theta(2A_i)\leqslant\mathit{Vol}_\theta(M)/k.
$$
Without loss of generality, we can suppose that these inequalities hold for $i=1,\ldots,k$. Combining this with relationship~\eqref{eq1}, we obtain
$$
\CAP_{2}(A_i,2A_i)\leqslant\bar c(n,C_2)\left(\mathit{Vol}_\theta(M)/k\right)^{n/(n+1)}
$$ 
for any $i=1,\ldots,k$. To demonstrate the lower bound for the counting function it is sufficient to construct $k$ linearly independent functions $\{u_i\}$ such that
$$
\int\abs{\nabla_bu_i}^2\mathit{dVol}_\theta<\lambda\cdot\int u_i^2\mathit{dVol}_\theta.
$$
As such functions one can take nearly optimal test-functions for the capacitors $(A_i,2A_i)$, where $i=1,\ldots,k$. In more detail, for a sufficiently small $\varepsilon$ we can choose test-functions $u_i$ such that
$$
\int\abs{\nabla_bu_i}^2\mathit{dVol}_\theta<\CAP_{2}(A_i,2A_i)+\varepsilon\leqslant 2\bar c(n,C_2)\left(\mathit{Vol}_\theta(M)/k\right)^{n/(n+1)}.
$$
Now using inequality~\eqref{eq2}, we get
$$
\left(\int\abs{\nabla_bu_i}^2\mathit{dVol}_\theta\right)/\left(\int u_i^2\mathit{dVol}_\theta\right)<\frac{4\bar c(n,C_2)}{c(C_1)}(k/\mathit{Vol}_\theta(M))^{1/(n+1)}\leqslant\lambda,
$$
where the last inequality follows by the choice of the quantities $C$ and $k$. This proves the lower bound for the counting function. The upper bounds for the eigenvalues can be now obtained in a fashion similar to the one in the proof of Cor.~\ref{ei:b}.

{\small

}

\end{document}